\documentclass{article}
\usepackage[T1]{fontenc}
\RequirePackage[OT1]{fontenc}
\usepackage{amsmath,amsfonts,amssymb,amsthm, color,wrapfig,float,dsfont}
\usepackage{authblk}
\usepackage{url}   
\usepackage{enumerate}
\usepackage{color}
\usepackage{a4wide}
\usepackage[francais,english]{babel}
\title{The Vlasov-Fokker-Planck equation in non-convex landscapes: convergence to equilibrium}
\author[1]{M. H. Duong}
\author[2]{J. Tugaut}
\affil[1]{Mathematics institute, University of Warwick, Coventry CV4 7AL, UK}
\affil[2]{Univ Lyon, UJM-Saint-\'Etienne, CNRS UMR 5208, Institut Camille Jordan, 10 rue Tr\'efilerie, CS 82301, F-42023 Saint-Etienne Cedex 2, France}
\date{\today}

\def\div{\mathop{\mathrm{div}}\nolimits}
\def\!{\mathop{\mathrm{!}}}

\def \R{\mathbb{R}}


\begin{document}

\newcommand{\bRb}{\mathbb{R}}
\newcommand{\bCb}{\mathbb{C}}
\newcommand{\bEb}{\mathbb{E}}
\newcommand{\bKb}{\mathbb{K}}
\newcommand{\bQb}{\mathbb{Q}}
\newcommand{\bFb}{\mathbb{F}}
\newcommand{\bGb}{\mathbb{G}}
\newcommand{\bNb}{\mathbb{N}}
\newcommand{\bZb}{\mathbb{Z}}

\newcommand{\deriv}{\stackrel{\mbox{\bf\Large{}$\cdot$\normalsize{}}}}
\newcommand{\dederiv}{\stackrel{\mbox{\bf\Large{}$\cdot\cdot$\normalsize{}}}}

\theoremstyle{plain} \newtheorem{thm}{Theorem}[section]
\theoremstyle{plain} \newtheorem{prop}[thm]{Proposition}
\theoremstyle{plain} \newtheorem{props}[thm]{Properties}
\theoremstyle{plain} \newtheorem{ex}[thm]{Example}
\theoremstyle{plain} \newtheorem{contrex}[thm]{Coounterexample}
\theoremstyle{plain} \newtheorem{cor}[thm]{Corollary}
\theoremstyle{plain} \newtheorem{hyp}[thm]{Hypothesis}
\theoremstyle{plain} \newtheorem{hyps}[thm]{Hypotheses}
\theoremstyle{plain} \newtheorem{lem}[thm]{Lemma}
\theoremstyle{plain} \newtheorem{rem}[thm]{Remark}
\theoremstyle{plain} \newtheorem{nota}[thm]{Notation}
\theoremstyle{plain} \newtheorem{defn}[thm]{Definition}

\newcommand{\cRc}{\mathcal{R}}
\newcommand{\cCc}{\mathcal{C}}
\newcommand{\cEc}{\mathcal{E}}
\newcommand{\cKc}{\mathcal{K}}
\newcommand{\cQc}{\mathcal{Q}}
\newcommand{\cFc}{\mathcal{F}}
\newcommand{\cGc}{\mathcal{G}}
\newcommand{\cNc}{\mathcal{N}}
\newcommand{\cZc}{\mathcal{Z}}
\newcommand{\cOc}{\mathcal{O}}
\newcommand{\cSc}{\mathcal{S}}
\newcommand{\cAc}{\mathcal{A}}
\newcommand{\cBc}{\mathcal{B}}
\newcommand{\cIc}{\mathcal{I}}
\newcommand{\cDc}{\mathcal{D}}
\newcommand{\cLc}{\mathcal{L}}
\newcommand{\cHc}{\mathcal{H}}

\newcommand{\Ima}{{\rm Im}}
\newcommand{\Rea}{{\rm Re}}
\newcommand{\gaga}{\left|\left|}
\newcommand{\drdr}{\right|\right|}
\newcommand{\lra}{\left\langle}
\newcommand{\rra}{\right\rangle}

\newcommand{\bal}{\begin{align}}
\newcommand{\eal}{\end{align}}
\newcommand{\beq}{\begin{equation}}
\newcommand{\eeq}{\end{equation}}

\newcommand{\ba}{\begin{align*}}
\newcommand{\be}{\begin{equation*}}
\newcommand{\ee}{\end{equation*}}

\newcommand{\EE}{\mathbb{E}}
\newcommand{\PP}{\mathbb{P}}

\newcommand{\sepa}{\left|\right.}

\newcommand{\crg}{[\![}
\newcommand{\crd}{]\!]}

\newcommand{\sgn}{{\rm Sign}}
\newcommand{\vari}{{\rm Var}}
\newcommand{\cov}{{\rm Cov}}
\newcommand{\poin}[1]{\dot{#1}}
\newcommand{\norm}[1]{\Vert #1\Vert}

\maketitle
\begin{abstract}
In this paper, we study the long-time behaviour of solutions to the Vlasov-Fokker-Planck equation where the confining potential is non-convex. This is a nonlocal nonlinear partial differential equation describing the time evolution of the probability distribution of a particle moving under the influence of a non-convex potential, an interaction potential, a friction force and a stochastic force. Using the free-energy approach, we show that under suitable assumptions solutions of the Vlasov-Fokker-Planck equation converge to an invariant probability.  
\end{abstract}
\medskip

{\bf Key words and phrases:} Kinetic equation, Vlasov-Fokker-Planck equation, Free-energy, Asymptotic behaviour, Granular media equation, Stochastic processes \par\medskip

{\bf 2000 AMS subject classifications:} Primary:  60H10, 35B40; Secondary: 35K55, 60J60, 60G10\par\medskip

\section{Introduction}
\label{sec:intro}

\subsection{The Vlasov-Fokker-Planck equation}
This paper concerns with the convergence to equilibrium of the following (dimensionless) \emph{Vlasov-Fokker-Planck (VFP)} equation
\begin{equation}
\label{eq:VFP}
\partial_t \rho = - \div_q\Big(\rho p\Big) + \div_p\left[\rho\left(\nabla_q V + \nabla_q F\ast \rho + p\right)\right]+\lambda\Delta_p \rho.
\end{equation}
for the probability density function $\rho(t,q,p)$ in $ \R_+\times\R^d\times\R^d$. In the above equation, subscripts as in $\div_q$ and $\Delta_p$ indicate that the differential operators act only on those variables. The functions $V = V(q)$ and $F = F(q)$ are given. The convolution $F\ast \rho$ is defined by $(F\ast\rho)(q)=\int_{\mathbb{R}^{2d}}F(q-q')\rho(q',p')\,dq'dp'$. Finally $\lambda$ is a positive constant.

Equation~\eqref{eq:VFP} describes the evolution of the probability density $\rho(t,q,p)$ for the following self-stabilizing diffusion process in the phase space 
\begin{align}
\label{eq: SDE}
&dQ(t)= P(t)\, dt,\nonumber
\\&dP(t)=-\nabla V(Q(t))\,dt-\nabla F\ast \rho_t(Q(t))\,dt-P(t)\,dt+\sqrt{2\lambda}\, dW(t).
\end{align}
This is a nonlinear diffusion since the own law of the process intervenes in the drift. It models the movement of a particle under a fixed potential $V$, an interaction potential $F$, a friction force (the drift term $-P(t)\,dt$) and a stochastic forcing described by the $d$-dimensional Wiener measures~$W(t)$. Equation \eqref{eq:VFP} and system \eqref{eq: SDE} has been widely used in chemistry and statistical mechanics such as a model for
chemical reactions or as a model for particles interacting through Coulomb or gravitational forces \cite{Kramers40, BD95}.

\subsection{Literature overview and the aim of the present paper}
Closely related to the VFP equation is the McKean-Vlasov equation
\[
\partial_t \hat\rho(t,x)=\div[\hat\rho(t,x)(\nabla V(x)+(F\ast\hat\rho_t)(x))]+\lambda \Delta\hat\rho(t,x),
\]
which is the corresponding spatial homogeneous model and can also be obtained from the former in the overdamped limit \cite{DLPS2017}. The trend to equilibrium for the McKean-Vlasov equation has been studied extensively in the literature due to the multiplicity of assumptions that one can impose on the confining potential $V$ and the interaction potential $F$. When $V$ and $F$ are convex, the McKean-Vlasov has a unique stationary solution and one can obtain explicit rate of convergence to the equilibrium, see e.g., \cite{CMV2006, BGG13} and references therein. When the potential $V$ is a double-well and the interaction potential $F$ is quadratic, Dawson \cite{Dawson83} proved, among other things, that the McKean-Vlasov equation exhibits a phase transition phenomenon, that is it may have a unique stationary solution or several ones when the diffusion coefficient is above or below a critical value. Around the same time, Tamura proved \cite{Tamura1984} the existence of a phase transition and investigated the order of the convergence to an equilibrium in the case when $V$ is a quadratic potential perturbed by a rapidly decreasing function and $F$ is also a rapidly decreasing function. Later on, Tugaut and Herrmann proved the thirdness of stationary solutions of the McKean-Vlasov equation when $V$ is a double-well potential and $F$ is an even polynomial in one-dimensional space. Subsequently, in the same setting, Tugaut proved the convergence to an equilibrium \cite{AOP} and the existence of a phase transition \cite{PT}. He then extended the non-uniqueness and convergence of stationary solutions to general multi-wells lansdcape in general dimension \cite{SPA,JTP}. More recently, Barbaro and co-authors \cite{BCCD16} have demonstrated the existence of a phase transition for the case of (a family of) double-well potential and quadratic interaction in an arbitrary dimension by verifying that the stability of the isotropic equilibria changes as the diffusion coefficient 
crosses a threshold value.

The study of the long-time behaviour for the VFP equation is often more difficult than that of the McKean-Vlasov equation because of two reasons: (1) it is a degenerate diffusion process where the Laplacian acts only on the $p$ variable and (2) it is not a gradient flows but simultaneously presents both Hamiltonian and gradient flows effects. When $V$ and $F$ are both convex, similarly as in the McKean-Vlasov case, the VFP has a unique invariant measure and one can also get explicit rate of convergence to the equilibrium, see \cite{BGM10,Mon2017} (see also \cite{Duong15NA} for a similar system). The case where $V$ and $F$ are non-convex is much less known. In \cite{DuongTugaut2016}, we showed that, non-uniqueness (in multi-well landscapes and general dimensional setting) and phase transition phenomena (in one dimensional case) of invariant measures also occur in the full VFP equation.

In this paper, we continue considering the VFP equation in the non-convex setting. In the main theorem, Theorem \ref{theo: main result} below, we show that under suitable assumptions, solutions of the VFP equation converge to an invariant probability. We employ the free-energy approach combining techniques that have been used in \cite{BCS97} for the Vlasov-Poisson-Fokker-Planck system and in \cite{AOP,SPA} for the McKean-Vlasov system. Several  technical improvements will be carried out to overcome the two difficulties mentioned above.
\subsection{Organization of the paper}
The rest of the paper is organized as follows. We state assumptions and the main result in Section~\ref{sec: asspt and main result}. The proof of the main result is given in Section~\ref{sec: proof of main thm}.
\section{Assumptions and statement of the main result}
\label{sec: asspt and main result}
We now give the exact assumptions of the paper. We take similar hypotheses as the ones of \cite{AOP,SPA}. Throughout this paper, $\gaga.\drdr$ denotes the Euclidean norm on $\bRb^d$.\\[4pt]
{\bf Assumption (M). }\emph{We say that the confining potential $V$, the interacting potential $F$ and the initial law $\rho_0$ satisfy the set of assumptions \textbf{(M)} if}\\[2pt]
{\bf (M-1)} \emph{$V$ is a smooth function on $\bRb^d$.}\\
{\bf (M-2)} \emph{There exists a compact subset $\mathcal{K}$ of $\bRb^d$ such that $\nabla^2V(q)>0$, for all $q\notin\mathcal{K}$. There exists a positive constant $m$ such that $||\nabla V(x)||\leq K||x||^{2m}$. Moreover, $\displaystyle\lim_{||q||\to+\infty}\nabla^2V(q)=+\infty$.}\\
{\bf (M-3)} \emph{There exist positive constants $C_2,C_4$ such that $V(q)\geq C_4\gaga q\drdr^4-C_2\gaga q\drdr^2$ for any $q\in\bRb^d$.}\\
{\bf (M-4)} \emph{There exists an even and positive polynomial function $G$ on $\bRb$ such that $F(q)=G(||q||)$. And, $\deg(G)=:2n\geq2$.}\\
{\bf (M-5)} \emph{The function $G$ is convex.}\\
{\bf (M-6)} \emph{The $8r^2$-th moment of the measure $\rho_0$ with respect to the variable $q$ is finite with $r:=\max\left\{m,n\right\}$. And, the second moment of the measure $\rho_0$ with respect to the variable $p$ is finite.}\\
{\bf (M-7)} \emph{The measure $\rho_0$ admits a $\mathcal{C}^\infty$-continuous density $\rho_0$ with respect to the Lebesgue measure. And, the entropy $S(\rho_0)=-\iint_{\bRb^d\times\bRb^d}\rho_0(q,p)\log(\rho_0(q,p))dqdp$ is finite.}\\[6pt]
Let us first give an overview about these assumptions.

Assumption \textbf{(M-1)} is for convenience and one can prove that it is sufficient that $V$ is of class $\mathcal{C}^2$. 

Assumption \textbf{(M-2)} allows the potential $V$ to confine the process and to avoid that it explodes. So it is used for having a solution to the stochastic differential equation.

Assumption \textbf{(M-3)} is used to show that the free-energy functional, see \eqref{eq: free energy}, is bounded from below. 

Assumptions \textbf{(M-4)} and \textbf{(M-5)} are assumed in order to rely on the results in \cite{AOP,SPA} about the description of the set of invariant probabilities for the McKean-Vlasov equation. Also, we need a control of the gradient of $F$ at infinity and a polynomial function is the simplest case. However, the convexity of the function $G$ is not necessary. To have $G''\geq C$ with $C\in\bRb$ would be sufficient.

Assumption \textbf{(M-6)} is necessary in order to ensure that there is a solution to the stochastic differential equation.

Assumption (M-7) ensures that the initial free-energy is finite, which is necessary to show that it converges to something finite.

\begin{defn}
\label{def:set}
By $\mathcal{A}_\sigma$ (resp. $\mathcal{S}_\sigma$), we denote the set of the limiting values of the family $\left\{\rho_t\,;\,t\geq0\right\}$ (resp. the set of the invariant probabilities of the VFP).
\end{defn}

\begin{defn}
\label{def:discrete}
We say that a set $\mathcal{D}$ of measures on $\bRb^d\times\bRb^d$ is discrete if for any $\nu\in\mathcal{D}$, there exists a neighbourhood $\mathcal{V}$ of $\nu$ for the topology of the weak convergence such that $\mathcal{D}\bigcap\mathcal{V}=\left\{\nu\right\}$. In a similar way, we say that $\mathcal{D}$ is path-connected if it is path-connected for the topology of the weak convergence.
\end{defn}

We now are ready to state the main result of the present paper whose proof is given in Section~\ref{sec: proof of main thm}.
\begin{thm}
\label{theo: main result}
Assume that the confining potential $V$, the interacting potential $F$ and the initial law $\rho_0$ satisfy the set of assumptions (M). Then the set $\mathcal{A}_\sigma$ is either a single element $\rho^\sigma\in\mathcal{S}_\sigma$ or a path-connected subset of $\mathcal{S}_\sigma$.
\end{thm}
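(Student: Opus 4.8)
The strategy is the classical free-energy / LaSalle invariance approach, adapted to the kinetic (degenerate, non-gradient) setting. Define the free-energy functional
\[
\mathcal{F}(\rho) = \iint_{\bRb^d\times\bRb^d}\Big(\tfrac{\|p\|^2}{2}+V(q)\Big)\rho\,dq\,dp + \tfrac12\iint\iint F(q-q')\rho(q,p)\rho(q',p')\,dq\,dp\,dq'\,dp' + \lambda\iint \rho\log\rho\,dq\,dp,
\]
and compute its dissipation along the flow \eqref{eq:VFP}. First I would verify, using assumptions \textbf{(M-3)}, \textbf{(M-6)}, \textbf{(M-7)} and the fact that $G$ is positive, that $\mathcal{F}(\rho_t)$ is finite for all $t$, bounded below, and nonincreasing, with dissipation given by the (partial) Fisher-information-type term $D(\rho_t) = \lambda\iint \rho_t\big\|\nabla_p\log\rho_t + \tfrac{p}{\lambda}\big\|^2\,dq\,dp \geq 0$; hence $\mathcal{F}(\rho_t)\downarrow \mathcal{F}_\infty$ and $\int_0^\infty D(\rho_s)\,ds<\infty$. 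Next I would establish tightness of the family $\{\rho_t\}_{t\ge 0}$: the lower bound \textbf{(M-3)} together with the monotonicity of $\mathcal{F}$ controls the $\|q\|^4$ moment, the kinetic term controls the $\|p\|^2$ moment, and propagation of higher moments (using \textbf{(M-2)}, \textbf{(M-6)}, the growth bounds on $\nabla V$ and $\nabla F$, and a Gronwall argument) gives uniform control of the $8r^2$-moment in $q$; this yields weak relative compactness, so $\mathcal{A}_\sigma\neq\emptyset$.

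The second stage is to identify $\mathcal{A}_\sigma\subseteq\mathcal{S}_\sigma$. Take $\rho_\infty\in\mathcal{A}_\sigma$, so $\rho_{t_k}\rightharpoonup\rho_\infty$ along some $t_k\to\infty$. Considering the shifted solutions $\rho^{(k)}_s := \rho_{t_k+s}$, the moment bounds give compactness uniformly in $k$, and passing to the limit (using parabolic regularity in $p$ and the convergence $\int_0^\infty D(\rho_s)\,ds<\infty$, which forces $D$ to vanish on the limit) one obtains a solution $\tilde\rho_s$ of \eqref{eq:VFP} with $\mathcal{F}(\tilde\rho_s)\equiv\mathcal{F}_\infty$ constant, hence $D(\tilde\rho_s)\equiv 0$. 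The vanishing of $D$ forces $\tilde\rho_s$ to be a local Maxwellian in $p$: $\tilde\rho_s(q,p)=\bar\rho_s(q)\,(2\pi\lambda)^{-d/2}e^{-\|p\|^2/(2\lambda)}$; substituting this ansatz back into \eqref{eq:VFP} and using that the time derivative of the free energy is already zero, the transport and friction terms must cancel, which forces $\bar\rho_s$ to be time-independent and to solve the stationary McKean–Vlasov equation $\nabla\bar\rho + \tfrac{1}{\lambda}\bar\rho(\nabla V + \nabla F\ast\bar\rho)=0$. Thus $\rho_\infty\in\mathcal{S}_\sigma$, and moreover $\rho_\infty$ has the explicit Gibbs form in $p$ with spatial marginal a McKean–Vlasov stationary state.

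The final stage is the topological dichotomy. By the previous step, $\mathcal{A}_\sigma$ is a compact (by tightness), connected subset of $\mathcal{S}_\sigma$; connectedness of $\mathcal{A}_\sigma$ is the standard $\omega$-limit-set argument — if it were disconnected into two relatively open pieces $U_1,U_2$, the continuity of $t\mapsto\rho_t$ in the weak topology (which follows from the moment bounds plus the equation) would force $\rho_t$ to leave every neighbourhood of $U_1\cup U_2$ along some subsequence, contradicting $\mathcal{A}_\sigma$ being the full limit set. Combining compactness with connectedness, a compact connected metric space is either a single point or, being also path-connected here because $\mathcal{S}_\sigma$ inherits path-connectedness of the McKean–Vlasov stationary set from \cite{AOP,SPA} via the explicit correspondence $\bar\rho\leftrightarrow\bar\rho\otimes$Maxwellian, we conclude $\mathcal{A}_\sigma$ is either a singleton $\rho^\sigma\in\mathcal{S}_\sigma$ or a path-connected subset of $\mathcal{S}_\sigma$.

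The main obstacle I anticipate is the rigorous passage to the limit in stage two: because the equation is degenerate (no regularization in $q$) and nonlinear through the convolution term $\nabla F\ast\rho$, justifying that weak limits of shifted solutions are again solutions — and that $D(\tilde\rho_s)\equiv 0$ genuinely pins down the Maxwellian-in-$p$ structure and then the stationarity in $q$ — requires careful use of the uniform higher-moment bounds (to handle the polynomial growth of $\nabla V$ and $\nabla F$ under the weak convergence) together with hypoelliptic/parabolic smoothing in the $p$-variable only; this is precisely where the "technical improvements'' over the McKean–Vlasov arguments of \cite{AOP,SPA} and the Vlasov–Poisson–Fokker–Planck arguments of \cite{BCS97} are needed.
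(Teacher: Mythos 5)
Your first two stages follow essentially the same route as the paper: the free energy $\Upsilon_\lambda$ is non-increasing with dissipation $\iint_{\R^{2d}}\rho_t\Vert p+\lambda\nabla_p\log\rho_t\Vert^2\,dqdp$, it is bounded below via \textbf{(M-3)} and the minorization of the negative part of the entropy by second moments, the trajectory is relatively compact, and a LaSalle-type argument on the time-shifted solutions $\rho_{t_n+\cdot}$ shows that vanishing dissipation forces the Maxwellian profile in $p$; substituting that ansatz into the equation and differentiating in $p$ then forces the spatial factor to be a stationary McKean--Vlasov profile. This is precisely the content of Lemmas~\ref{nadege}, \ref{oumaima}, \ref{hanae2}, Proposition~\ref{violette}, Lemma~\ref{tyler} and Proposition~\ref{manue}, so up to this point your plan and the paper's proof coincide.

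The gap is in your final stage. You deduce that $\mathcal{A}_\sigma$ is compact and connected and then assert path-connectedness on the grounds that $\mathcal{S}_\sigma$ ``inherits'' it from the McKean--Vlasov stationary set via the correspondence between $\bar\rho$ and $\bar\rho$ tensorized with the Maxwellian. That inference is not valid: a compact connected metric space need not be path-connected (the topologist's sine curve), and a connected subset of a path-connected ambient set need not be path-connected either, so neither the compactness-plus-connectedness of $\mathcal{A}_\sigma$ nor any nice structure of $\mathcal{S}_\sigma$ as a whole delivers the stated conclusion. The paper argues differently, following \cite{AOP,SPA}: assuming $\mathcal{A}_\sigma$ has more than one element and is not path-connected, one exhibits a test function $\varphi_0$ with $\int\varphi_0\,d\nu\neq0$ for every $\nu\in\mathcal{A}_\sigma$ while $\int\varphi_0\,d\rho_1^\infty<0<\int\varphi_0\,d\rho_2^\infty$ for two limit points; continuity of $t\mapsto\int\varphi_0\,d\rho_t$ and the intermediate value theorem then produce $t_n\to\infty$ with $\int\varphi_0\,d\rho_{t_n}=0$, and by Proposition~\ref{manue} a subsequence converges to an element of $\mathcal{A}_\sigma$ against which $\varphi_0$ integrates to zero --- a contradiction. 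To complete your proof you should replace the topological inheritance step by this separating-functional argument (or else import from \cite{AOP,SPA} the explicit parametrization of the relevant invariant measures that legitimizes upgrading connectedness to path-connectedness).
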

From this theorem we deduce the following corollary.
\begin{cor}
\label{negan}
Assume that the confining potential $V$, the interacting potential $F$ and the initial law $\rho_0$ satisfy the set of assumptions (M). Assume further that the set $\mathcal{S}_\sigma$ is discrete. Then the probability measure $\rho_t$ converges weakly to an invariant probability $\rho^\sigma\in\mathcal{S}_\sigma$, as $t$ goes to infinity.
\end{cor}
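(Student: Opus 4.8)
The plan is to read Corollary~\ref{negan} off Theorem~\ref{theo: main result} in two short moves: a topological reduction that collapses the two alternatives of the theorem into a single one, and a soft compactness argument that turns "unique limiting value'' into genuine convergence. For the first move, suppose $\mathcal{A}_\sigma$ is path-connected and contained in the discrete set $\mathcal{S}_\sigma$, and pick $\nu_1,\nu_2\in\mathcal{A}_\sigma$. A continuous path joining $\nu_1$ to $\nu_2$ inside $\mathcal{A}_\sigma$ has connected image; but this image, viewed as a subspace of the discrete set $\mathcal{S}_\sigma$ (see Definition~\ref{def:discrete}), is itself discrete, and a nonempty connected discrete space is a single point, so $\nu_1=\nu_2$. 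Hence $\mathcal{A}_\sigma$ is a singleton in this case as well, and in both alternatives of Theorem~\ref{theo: main result} we have $\mathcal{A}_\sigma=\{\rho^\sigma\}$ for some $\rho^\sigma\in\mathcal{S}_\sigma$.

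For the second move I would upgrade this to $\rho_t\to\rho^\sigma$. The key input is that the family $\{\rho_t:t\geq0\}$ is relatively compact for the topology of weak convergence, uniformly in $t$: by Markov's inequality this follows from the uniform-in-time bound $\sup_{t\geq0}\iint(\|q\|^2+\|p\|^2)\,\rho_t(q,p)\,dq\,dp<\infty$, which is part of the a priori moment estimates used to prove Theorem~\ref{theo: main result}. Since the weak topology on probability measures on $\mathbb{R}^d\times\mathbb{R}^d$ is metrizable, one argues by contradiction: if $\rho_t\not\to\rho^\sigma$, there are a weak-open neighbourhood $\mathcal{U}$ of $\rho^\sigma$ and times $t_k\to\infty$ with $\rho_{t_k}\notin\mathcal{U}$; relative compactness yields a weakly convergent subsequence whose limit $\mu$ belongs, by Definition~\ref{def:set}, to $\mathcal{A}_\sigma=\{\rho^\sigma\}$, so $\mu=\rho^\sigma$, contradicting $\rho_{t_k}\notin\mathcal{U}$. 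Therefore $\rho_t\to\rho^\sigma$ weakly as $t\to\infty$.

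The substantive content is entirely in Theorem~\ref{theo: main result} (and in the free-energy/moment machinery behind it); for the corollary there is no genuine obstacle. If I had to single out the one point that needs checking, it is that the relative compactness of $\{\rho_t\}$ does not degrade as $t\to\infty$, i.e.\ that the moment bounds established in Section~\ref{sec: proof of main thm} are uniform in time, since this is exactly what simultaneously guarantees $\mathcal{A}_\sigma\neq\emptyset$ and legitimizes the subsequence extraction above.
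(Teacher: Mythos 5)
Your proposal is correct and is exactly the deduction the paper intends (the paper states the corollary without writing out a proof): a path-connected subset of a discrete set is a singleton, so both alternatives of Theorem~\ref{theo: main result} give $\mathcal{A}_\sigma=\{\rho^\sigma\}$, and the uniform-in-time moment bounds (Lemma~\ref{hanae2}) provide the tightness needed to convert a unique weak limit point into weak convergence. Both steps check out, including your correct emphasis that the moment estimates are uniform in $t$.
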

For examples of potentials for which the set of invariant probabilities is discrete, see \cite{DuongTugaut2016}.

This corollary means that if we have a set of invariant probabilities which is finite, thus under simple assumptions, the solution of the VFP equation does converge as time goes to infinity to one of the steady state. In particular, if there is a unique invariant measure with total mass equal to one, there is convergence towards this measure. For example, with the potentials $V(x):=\frac{x^4}{4}-\frac{x^2}{2}$ and $F(x):=\alpha\frac{x^2}{2}$ with $\alpha>0$, it is known that there is either one or three invariant probabilities  \cite{DuongTugaut2016}. From Corollary \ref{negan}, we know that there is a convergence towards one of the invariant probability. There still is a remaining question: in the case in which there are three steady states, which one does the solution of the Vlasov-Fokker-Planck equation converge to? We leave this question for future research.

\section{Proof of the main theorem}
\label{sec: proof of main thm}
In this section, we prove Theorem \ref{theo: main result}. We employ the free-energy approach combining techniques that have been used in \cite{BCS97} for the Vlasov-Poisson-Fokker-Planck system and in \cite{AOP,SPA} for the McKean-Vlasov system. The advantage of this method is that it facilitates the Hamiltonian-gradient flows structure of the VFP equation. Our proof will consist of three steps as follows.
\begin{enumerate}[Step 1)]
\item  We first consider a free-energy functional showing that it is non-increasing along solutions $\rho_t$ of the VFP equation (Lemma \ref{nadege}) and is bounded from below (Lemma \ref{oumaima}).
\item Then we show that one can extract a converging subsequence as $t\to\infty$ from the trajectories $\{\rho_t\}_{t\geq 0}$ (Corollary \ref{oumaima2}, Lemma \ref{hanae2} and Proposition \ref{violette}).
\item In the last step, we characterise the set of stationary probabilities (Lemma \ref{tyler} and Proposition \ref{manue}).
\end{enumerate}
We now follow the strategy. Denote $H(q,p):=\frac{p^2}{2}+V(q)$. For $\rho(dqdp)=\rho(q,p)\,dqdp$, we define the free energy by
\begin{equation}
\label{eq: free energy}
\Upsilon_\lambda(\rho):=\iint_{\mathbb{R}^{2d}}\Big(H(q,p)+\frac{1}{2}F\ast \rho+ \lambda\log \rho\Big)\rho\,dqdp\,.
\end{equation}
The free-energy functional $\Upsilon_\lambda$ plays the role of a Lyapunov function. Indeed, the next lemma shows that the free-energy is non-increasing along the trajectories of the flow $(\rho_t)_{t\geq0}$.
\begin{lem}
\label{nadege}
Let $\rho_t(dqdp)=\rho_t(q,p)\,dqdp$ be a solution of \eqref{eq:VFP}. We put $\eta(t):=\Upsilon_\lambda(\rho_t)$. Then it holds that
\begin{align}
\label{time derivative of the free energy}
\frac{d}{dt}\eta(t)=&-\iint_{\R^{2d}}\frac{1}{\rho_t}\gaga p\rho_t+\lambda\nabla_p\rho_t\drdr^2\,dqdp\\
\label{pareil}
=&-\iint_{\bRb^{2d}}\gaga p\sqrt{\rho_t}+2\lambda\nabla_p\sqrt{\rho_t}\drdr^2\,dqdp\leq0\,.
\end{align}
\end{lem}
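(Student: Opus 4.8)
The strategy is the standard entropy-dissipation computation for kinetic Fokker--Planck type equations, done formally (assuming enough regularity and decay on $\rho_t$ to justify the integrations by parts). First I would differentiate $\eta(t)=\Upsilon_\lambda(\rho_t)$ under the integral sign. Writing $\Upsilon_\lambda(\rho)=\iint H\rho + \tfrac12\iint (F\ast\rho)\rho + \lambda\iint \rho\log\rho$, the term $\tfrac{d}{dt}\iint\rho\log\rho = \iint(\log\rho_t+1)\,\partial_t\rho_t$, and since $F$ is even the interaction term contributes $\tfrac{d}{dt}\big[\tfrac12\iint(F\ast\rho_t)\rho_t\big] = \iint (F\ast\rho_t)\,\partial_t\rho_t$ (the factor $\tfrac12$ is cancelled by the symmetry, which is exactly why it is there). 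Hence
\[
\frac{d}{dt}\eta(t)=\iint_{\mathbb{R}^{2d}}\Big(H(q,p)+F\ast\rho_t+\lambda\log\rho_t+\lambda\Big)\,\partial_t\rho_t\,dqdp.
\]
The constant $\lambda$ integrates against $\partial_t\rho_t$ to give $\lambda\,\tfrac{d}{dt}\iint\rho_t=0$, so it may be dropped.

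Next I would substitute the VFP equation \eqref{eq:VFP} for $\partial_t\rho_t$ and integrate by parts, moving the operators $\div_q$, $\div_p$ and $\Delta_p$ onto the multiplier $\Phi_t:=H+F\ast\rho_t+\lambda\log\rho_t$. Note $\nabla_q\Phi_t=\nabla_qV+\nabla_q F\ast\rho_t$, $\nabla_p\Phi_t=p+\lambda\nabla_p\rho_t/\rho_t$. The Hamiltonian transport part $-\div_q(\rho_t p)$ pairs with $\nabla_q\Phi_t$ to give $\iint \rho_t\, p\cdot(\nabla_qV+\nabla_qF\ast\rho_t)$; the drift part $\div_p[\rho_t(\nabla_qV+\nabla_qF\ast\rho_t+p)]$ pairs with $\nabla_p\Phi_t$ to give $-\iint\rho_t(\nabla_qV+\nabla_qF\ast\rho_t+p)\cdot(p+\lambda\nabla_p\rho_t/\rho_t)$; and $\lambda\Delta_p\rho_t$ pairs with $\nabla_p\Phi_t$ after one integration by parts to give $-\lambda\iint\nabla_p\rho_t\cdot(p+\lambda\nabla_p\rho_t/\rho_t)$. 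The crucial cancellation is that the terms involving $\nabla_qV+\nabla_qF\ast\rho_t$ appear with opposite signs from the transport and drift contributions and cancel exactly (this is the reversibility/Hamiltonian structure); what survives is precisely $-\iint\rho_t(p+\lambda\nabla_p\rho_t/\rho_t)\cdot p - \lambda\iint\nabla_p\rho_t\cdot(p+\lambda\nabla_p\rho_t/\rho_t) = -\iint \tfrac{1}{\rho_t}\,|p\rho_t+\lambda\nabla_p\rho_t|^2$, which is \eqref{time derivative of the free energy}.

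Finally, for the reformulation \eqref{pareil} I would simply use $\nabla_p\rho_t = 2\sqrt{\rho_t}\,\nabla_p\sqrt{\rho_t}$, so that $p\rho_t+\lambda\nabla_p\rho_t = \sqrt{\rho_t}\,(p\sqrt{\rho_t}+2\lambda\nabla_p\sqrt{\rho_t})$ and the prefactor $1/\rho_t$ cancels the extra $\rho_t$; nonnegativity of the integrand is then manifest, giving $\tfrac{d}{dt}\eta(t)\le 0$.

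The main obstacle is not the algebra but the justification: differentiation under the integral and all the integrations by parts require that $\rho_t$, $\log\rho_t$, $|q|^{2m}$, $p^2$ etc. have enough integrability and that boundary terms at infinity vanish. I would handle this by invoking the moment and entropy bounds propagated from Assumption (M) (in particular (M-3), (M-6), (M-7)), exactly as in the approach of \cite{BCS97,AOP,SPA}, and would note — as is customary for such free-energy identities — that the computation is rigorous for sufficiently regular solutions and extends to the general case by approximation; alternatively one proves directly that the right-hand side is a well-defined (possibly $-\infty$) quantity and that $t\mapsto\eta(t)$ is nonincreasing, which is all that is needed in the sequel.
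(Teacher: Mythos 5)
Your proposal is correct and follows essentially the same route as the paper: differentiate under the integral, test $\partial_t\rho_t$ against $H+F\ast\rho_t+\lambda\log\rho_t$ (the $+\lambda$ dropping out by mass conservation), integrate by parts, observe that the Hamiltonian/drift contributions cancel, and factor the remainder as $-\iint\frac{1}{\rho_t}\|p\rho_t+\lambda\nabla_p\rho_t\|^2$ --- the only cosmetic difference being that the paper encodes the cancellation via the antisymmetry of the symplectic matrix $J$ in the rewriting $\partial_t\rho_t=\div(J\nabla\mathcal{H}\rho_t)+\div_p(p\rho_t+\lambda\nabla_p\rho_t)$, whereas you carry it out term by term. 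One small inaccuracy: $\nabla_q\Phi_t$ also contains $\lambda\nabla_q\rho_t/\rho_t$, but its contribution $\lambda\iint p\cdot\nabla_q\rho_t\,dqdp$ vanishes upon integration in $q$, which is exactly the extra term the paper disposes of by integration by parts, so the conclusion is unaffected.
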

\begin{proof}
Note that Equation \eqref{eq:VFP} can be written as
\begin{equation*}
\partial_t\rho_t=\div (J\nabla \mathcal{H}\rho_t)+\div_p\left(p\rho_t+\lambda\nabla_p\rho_t\right),
\end{equation*}
where $\div$ and $\nabla$ denote the divergence and gradient operators respectively with respect to the full spatial coordinate $(q,p)$, $\mathcal{H}(q,p):=H(q,p)+(F\ast \rho_t)(q)$ and
$J=\begin{pmatrix}
0&I\\
-I&0
\end{pmatrix}
$
being the  canonical $2d\times 2d$-symplectic matrix.
This formulation is useful in the following calculation
\begin{align*}
\frac{d}{dt}\eta(t)&=\iint_{\R^{2d}}\left(H(q,p)+F\ast\rho_t(q)+\lambda(\log\rho_t+1)\right)\partial_t\rho_t(q,p)\,dqdp\\
&=\iint_{\R^{2d}}\Big(H(q,p)+F\ast\rho_t(q)+\lambda\log\rho_t\Big)\partial_t\rho_t(q,p)\,dqdp\\
&=\iint_{\R^{2d}}\Big(\mathcal{H}(q,p)+\lambda\log\rho_t\Big)\left(\div (J\nabla \mathcal{H}\rho_t)+\div_p\left(p\rho_t+\lambda\nabla_p\rho_t\right)\right)\,dqdp\\
&\overset{(*)}{=}-\iint_{\R^{2d}}\left(J\nabla\mathcal{H}\cdot\nabla \mathcal{H}\rho_t+\lambda J\nabla\mathcal{H}\cdot\begin{pmatrix}
\nabla_q\rho_t\\
\nabla_p\rho_t
\end{pmatrix}\right)\,dqdp\\
&\quad\quad-\iint_{\R^{2d}}\left(\left(p\rho_t+\lambda\nabla_p\rho_t\right)\cdot \nabla_p\left(\mathcal{H}(q,p)+\lambda\log\rho_t\right)\right)\,dqdp
\\&=-\iint_{\R^{2d}}\frac{1}{\rho_t}\gaga p\rho_t+\lambda\nabla_p\rho_t\drdr^2\,dqdp,
\end{align*}
which proves \eqref{time derivative of the free energy}. Note that the first two terms in $(*)$ have vanished because of the anti-symmetry property of $J$ and integration by parts. Equation \eqref{pareil} is just another representation of \eqref{time derivative of the free energy}.
\end{proof}

Let us point out that the derivative of $\eta$ does not directly provide the form that must satisfy an adherence value of the set $\{\rho_t\,;\,t\geq0\}$. Consequently, obtaining the convergence for VFP equation is more difficult than it has been for the McKean-Vlasov equation (in which $\eta'(t)=0$ if and only if $\rho_t$ is an invariant probability), see \cite{BCCP98,CMV03} for the convex case and \cite{AOP,SPA} for the nonconvex one.

\begin{lem}
\label{oumaima}
The functional $\eta$ is bounded from below by a constant $\Xi_\lambda$.
\end{lem}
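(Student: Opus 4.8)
The plan is to show that the functional $\Upsilon_\lambda(\rho)$, defined in \eqref{eq: free energy}, is bounded below by a constant depending only on $\lambda$, $V$, $C_2$ and $C_4$ but not on the probability density $\rho$; evaluating at $\rho=\rho_t$ then gives the assertion for $\eta$. First I would discard the interaction term: by \textbf{(M-4)}, $F(q)=G(\gaga q\drdr)$ with $G$ an even positive polynomial, so $F\geq 0$, hence $F\ast\rho\geq 0$ and $\tfrac12\iint_{\R^{2d}}(F\ast\rho)\rho\,dqdp\geq 0$. Therefore
\[
\Upsilon_\lambda(\rho)\ \geq\ \iint_{\R^{2d}}\big(H(q,p)+\lambda\log\rho\big)\rho\,dqdp\ =\ \lambda\iint_{\R^{2d}}\rho\log\rho\,dqdp\ +\ \iint_{\R^{2d}}H\rho\,dqdp .
\]

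The main obstacle is that the entropy term $\lambda\iint\rho\log\rho$ is not bounded below by itself. I would resolve this by absorbing half of the Hamiltonian into it and recognising a relative entropy: with $Z:=\iint_{\R^{2d}}e^{-H(q,p)/(2\lambda)}\,dqdp$ and $\mu:=Z^{-1}e^{-H/(2\lambda)}$,
\[
\lambda\iint_{\R^{2d}}\rho\log\rho\,dqdp+\tfrac12\iint_{\R^{2d}}H\rho\,dqdp\ =\ \lambda\iint_{\R^{2d}}\rho\log\frac{\rho}{\mu}\,dqdp\ -\ \lambda\log Z .
\]
Here one must check $Z<\infty$: the integral factorises as $\big(\int_{\R^d}e^{-p^2/(4\lambda)}\,dp\big)\big(\int_{\R^d}e^{-V(q)/(2\lambda)}\,dq\big)$, the first factor being a Gaussian integral and the second finite because \textbf{(M-3)} gives $V(q)\geq C_4\gaga q\drdr^4-C_2\gaga q\drdr^2\to+\infty$. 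Since $\mu$ is a probability density, the elementary bound $x\log x\geq x-1$ applied pointwise to $\rho/\mu$ yields $\iint\rho\log(\rho/\mu)\,dqdp\geq\iint(\rho-\mu)\,dqdp=0$.

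Combining these, using $\tfrac12 p^2\geq 0$ and then \textbf{(M-3)} once more together with $\iint\rho\,dqdp=1$,
\[
\Upsilon_\lambda(\rho)\ \geq\ -\lambda\log Z+\tfrac12\iint_{\R^{2d}}V(q)\rho\,dqdp\ \geq\ -\lambda\log Z+\tfrac12\inf_{s\geq0}\big(C_4 s^2-C_2 s\big)\ =\ -\lambda\log Z-\frac{C_2^2}{8C_4},
\]
so $\Xi_\lambda:=-\lambda\log Z-C_2^2/(8C_4)$ works, and the bound is independent of $\rho$. The only point requiring some care is that $\iint\rho\log\rho$ need not be separately finite, so the manipulation should be phrased through the always well-defined relative entropy $\iint\rho\log(\rho/\mu)\in[0,+\infty]$: if $\Upsilon_\lambda(\rho_t)=+\infty$ the bound is trivial, and otherwise the identity above is legitimate since $\iint H\rho\,dqdp\geq -C_2^2/(4C_4)>-\infty$. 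Note that this lemma uses only \textbf{(M-3)} and the positivity of $F$ from \textbf{(M-4)}; none of the moment or regularity hypotheses on $\rho_0$ enter.
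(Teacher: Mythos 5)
Your proof is correct, but it follows a genuinely different route from the paper's. The paper, following the first part of Theorem 2.1 in \cite{BCCP98}, keeps only the negative part of the entropy, splits it into the regions $\{e^{-\gaga(q,p)\drdr}<\rho<1\}$ and $\{\rho\le e^{-\gaga(q,p)\drdr}\}$, bounds the first piece by the second moments in $q$ and $p$ and the second piece by a dimensional constant via the map $x\mapsto\sqrt{x}\log x$, and then lets the kinetic term $p^2/2$ cancel the $\gaga p\drdr^2/2$ contribution while \textbf{(M-3)} controls $V(q)-\gaga q\drdr^2/2$. You instead invoke the Gibbs variational principle: absorb half of $H$ into the entropy to form the relative entropy with respect to $\mu=Z^{-1}e^{-H/(2\lambda)}$, which is nonnegative by $x\log x\ge x-1$, and then bound the remaining $\tfrac12\iint V\rho$ by \textbf{(M-3)}. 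Both arguments use the same two ingredients (positivity of $F$, coercivity of $V$ from \textbf{(M-3)}), and your handling of the possible non-integrability of $\rho\log\rho$ via the always well-defined relative entropy is sound. Your version is shorter and yields the cleaner intermediate bound $\Upsilon_\lambda(\rho)\ge -\lambda\log Z+\tfrac12\iint H\rho$, which would in fact serve just as well as the paper's inequality \eqref{eq:fr:minoration} in the proof of Lemma \ref{hanae2} (it directly gives uniform-in-time control of $\iint V\rho_t$ and of the kinetic energy). What the paper's longer route buys is precisely the explicit estimate of the negative part of the entropy by the second moments, which is reused verbatim in Lemma \ref{hanae2} to bound the positive part $\iint\rho_t\log\rho_t\,\mathds{1}_{\{\rho_t\ge1\}}$ uniformly in time; if you adopted your proof you would want to note that this entropy-versus-moment estimate still has to be established at that later point.
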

\begin{proof}
First, we have 
\begin{align*}
 \iint_{\bRb^{2d}}\left(F\ast\rho_t\right)(q)\rho_t(q,p)dqdp=\iiiint_{\bRb^{2d}\times\bRb^{2d}}F\left(q-q'\right)\rho_t(q,p)\rho_t(q',p')dqdpdq'dp'\geq0\,.
\end{align*}
We deduce that $\Upsilon_\lambda(\rho)\geq\Upsilon_\lambda^-(\rho)$ with
\begin{align*}
\Upsilon_\lambda^-(\rho):=&\lambda\iint_{\bRb^{2d}}\rho(q,p)\log\left(\rho(q,p)\right)\mathds{1}_{\{\rho(q,p)<1\}}dqdp+\iint_{\mathbb{R}^{2d}}\left(\frac{p^2}{2}+V(q)\right)\rho(q,p)dqdp\,.
\end{align*}
It suffices then to prove the inequality $\Upsilon_\lambda^-(\rho)\geq\Xi_\lambda$. We proceed as in the first part of the proof of Theorem 2.1 in \cite{BCCP98}. We show that we can minorate the negative part of the entropy by a function of the second moment (for the variable $q$).\\[2pt]
We split the negative part of the entropy into two integrals: \begin{align*}
&-\iint_{\mathbb{R}^{2d}}\rho(q,p)\log\left(\rho(q,p)\right)\mathds{1}_{\{\rho(q,p)<1\}}dqdp=-I_+-I_-\\
\mbox{with}\quad &I_+:=\iint_{\mathbb{R}^{2d}}\rho(q,p)\log\left(\rho(q,p)\right)\mathds{1}_{\{e^{-\gaga (q,p)\drdr}<\rho(q,p)<1\}}dqdp,\\
\mbox{and}\quad &I_-:=\iint_{\mathbb{R}^{2d}}\rho(q,p)\log\left(\rho(q,p)\right)\mathds{1}_{\{\rho(q,p)\leq e^{-\gaga (q,p)\drdr}\}}dqdp\,.
\end{align*}
By definition of $I_+$, we have the following estimate: 
\begin{align*}
I_+&\geq\iint_{\mathbb{R}^{2d}}\rho(q,p)\log\left(e^{-\gaga (q,p)\drdr}\right)\mathds{1}_{\{e^{-\gaga (q,p)\drdr}<\rho(q,p)<1\}}dqdp\\
&\geq-\iint_{\mathbb{R}^{2d}}\gaga (q,p)\drdr\rho(q,p)\mathds{1}_{\{e^{-\gaga q\drdr}<\rho(q,p)<1\}}dqdp\\
&\geq-\iint_{\mathbb{R}^{2d}}\gaga (q,p)\drdr\rho(q,p)dqdp\geq-\frac{\lambda}{2}-\frac{1}{2\lambda}\iint_{\mathbb{R}^{2d}}(\gaga q\drdr^2+\gaga p\drdr^2)\rho(q,p)dqdp\,.
\end{align*}
By putting $\gamma(x):=\sqrt{x}\log(x)\mathds{1}_{\{x<1\}}$, a simple computation provides $\gamma(x)\geq-2e^{-1}$ for all $x<1$. We deduce:
\begin{eqnarray*}
I_-=\iint_{\mathbb{R}^{2d}}\sqrt{\rho(q,p)}\gamma(\rho(q,p))\mathds{1}_{\{\rho(q,p)\leq e^{-\gaga (q,p)\drdr}\}}dqdp\geq-2e^{-1}\iint_{\mathbb{R}^{2d}}e^{-\frac{\gaga (q,p)\drdr}{2}}dqdp=C(d)\,,
\end{eqnarray*}
where $C(d)$ being a constant which depends only on the dimension $d$.

Consequently, it yields:
\begin{eqnarray*}
-\iint_{\mathbb{R}^{2d}}\rho(q,p)\log\left(\rho(q,p)\right)\mathds{1}_{\{\rho(q,p)<1\}}dqdp\leq\frac{1}{2\lambda}\iint_{\mathbb{R}^{2d}}(\gaga q\drdr^2+\gaga p\drdr^2)\rho(q,p)dqdp+\frac{\lambda}{2}+C(d)\,.
\end{eqnarray*}
This implies:
\begin{eqnarray}
\label{eq:fr:minoration}
\Upsilon_\lambda^-(\rho)\geq C'(\lambda,d)+\iint_{\mathbb{R}^{2d}}\left(V(q)-\frac{\|q\|^2}{2}\right)\rho(q,p)dqdp\,,
\end{eqnarray}
with $C'(\lambda, d)$ being a constant which depends on $\lambda$ and $d$. By hypothesis, there exist $C_2, C_4>0$ such that $V(q)\geq C_4\gaga q\drdr^4-C_2\gaga q\drdr^2$ so the function $q\mapsto V(q)-\frac{\gaga q\drdr^2}{2}$ is lower-bounded by a constant. This achieves the proof.

\end{proof}

We readily obtain the following corollaries:

\begin{cor}
\label{hanae}
There exists a constant $L_\lambda$ which does depend on $\rho_0$ such that $\eta(t)$ converges towards $L_\lambda$ as $t$ goes to infinity.
\end{cor}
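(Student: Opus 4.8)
The plan is to combine the two preceding lemmas: Lemma~\ref{nadege} shows that $\eta(t) = \Upsilon_\lambda(\rho_t)$ is non-increasing in $t$, since its time derivative~\eqref{time derivative of the free energy} is manifestly $\leq 0$ (it is minus an integral of a square). Lemma~\ref{oumaima} shows that $\eta$ is bounded from below by the constant $\Xi_\lambda$. A non-increasing function $t \mapsto \eta(t)$ that is bounded below is convergent: it has a limit
\[
L_\lambda := \lim_{t\to\infty}\eta(t) = \inf_{t\geq 0}\eta(t) \in [\Xi_\lambda, \eta(0)].
\]
So the corollary is essentially the elementary fact that a monotone bounded sequence (here, a monotone bounded function of a continuous parameter) converges to its infimum.

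The one point that requires a word of justification is the claim that $L_\lambda$ depends on $\rho_0$. This is immediate from the sandwich $\Xi_\lambda \leq L_\lambda \leq \eta(0) = \Upsilon_\lambda(\rho_0)$: the initial free energy $\Upsilon_\lambda(\rho_0)$ is finite by Assumption~(M-7) (finiteness of the entropy) together with (M-3) and (M-6) (which control the potential and kinetic energy contributions and the interaction term), and in general the limit will sit strictly below this value, varying with the initial datum. I would remark that the finiteness of $\eta(0)$ is exactly what Assumption~(M-7) is there to guarantee, as already noted in the discussion of the assumptions, so that $\eta$ is a genuine real-valued function on $[0,\infty)$ and the monotone-convergence argument applies without the degenerate possibility $\eta \equiv -\infty$ or $\eta(0) = +\infty$.

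There is no real obstacle here; the corollary is a two-line consequence of Lemmas~\ref{nadege} and~\ref{oumaima}. The only thing to be slightly careful about is regularity in $t$: strictly speaking one wants $\eta$ to be, say, continuous (or at least such that $\liminf$ and $\limsup$ as $t\to\infty$ coincide), which follows from the absolute continuity of $t\mapsto \eta(t)$ implicit in the computation of Lemma~\ref{nadege}. Given monotonicity, even a weaker statement suffices: for any sequence $t_n\to\infty$, $\eta(t_n)$ is eventually monotone and bounded, hence convergent, and the limit is independent of the sequence by monotonicity of $\eta$ itself. Thus I would write: by Lemma~\ref{nadege}, $\eta$ is non-increasing; by Lemma~\ref{oumaima}, $\eta \geq \Xi_\lambda$; hence $\eta(t)\downarrow L_\lambda := \inf_{t\geq 0}\eta(t)$ as $t\to\infty$, and $\Xi_\lambda \leq L_\lambda \leq \Upsilon_\lambda(\rho_0)$, so $L_\lambda$ depends on $\rho_0$.
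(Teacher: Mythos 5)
Your argument is correct and is exactly the one the paper intends: the corollary is stated without proof as an immediate consequence of Lemma~\ref{nadege} (monotonicity of $\eta$) and Lemma~\ref{oumaima} (lower bound), i.e.\ a non-increasing function bounded below converges to its infimum. Your additional remarks on the finiteness of $\eta(0)$ via Assumption (M-7) and on the dependence of $L_\lambda$ on $\rho_0$ are consistent with the paper's discussion of the assumptions.
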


\begin{cor}
\label{oumaima2}
For any $T>0$, we have
\begin{equation*}
\lim_{t\to\infty}\int_0^T\eta'(t+s)ds=\lim_{t\to\infty} (\eta(t+T)-\eta(t))=0\,.
\end{equation*}
Therefore, from this together with \eqref{pareil}, we deduce that 
\begin{equation}
\label{meryem}
\lim_{t\to+\infty}\left|\left|\nabla_p\sqrt{\rho^{(t)}}+\frac{p}{2 \lambda}\sqrt{\rho^{(t)}}\right|\right|_{{\rm L}^2\left([0;T]\times\bRb^d\times\bRb^d\right)}=0\,,
\end{equation}
where we put $\rho^{(t)}(s,q,p):=\rho_{t+s}(q,p)$.
\end{cor}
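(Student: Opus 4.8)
The plan is to deduce the first identity from Corollary \ref{hanae} via the fundamental theorem of calculus, and then to identify the resulting time-integral of $-\eta'$ with the square of the ${\rm L}^2$-norm appearing in \eqref{meryem}.

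First, Corollary \ref{hanae} gives $\eta(t)\to L_\lambda$ as $t\to\infty$; evaluating this also along $t+T$ (since $t+T\to\infty$ as well) yields $\lim_{t\to\infty}\bigl(\eta(t+T)-\eta(t)\bigr)=L_\lambda-L_\lambda=0$. Since $\eta$ is $\mathcal{C}^1$ on $[0,\infty)$ by Lemma \ref{nadege} — indeed non-increasing, with derivative given by \eqref{pareil} — the fundamental theorem of calculus together with the substitution $\tau=t+s$ gives $\eta(t+T)-\eta(t)=\int_t^{t+T}\eta'(\tau)\,d\tau=\int_0^T\eta'(t+s)\,ds$. This proves the first displayed chain of equalities and that both quantities vanish as $t\to\infty$.

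Next I would rewrite the right-hand side of \eqref{pareil}. Since $p\sqrt{\rho_\tau}+2\lambda\nabla_p\sqrt{\rho_\tau}=2\lambda\bigl(\nabla_p\sqrt{\rho_\tau}+\tfrac{p}{2\lambda}\sqrt{\rho_\tau}\bigr)$, expanding the square gives the pointwise identity
\[
\gaga p\sqrt{\rho_\tau}+2\lambda\nabla_p\sqrt{\rho_\tau}\drdr^2=4\lambda^2\,\gaga \nabla_p\sqrt{\rho_\tau}+\tfrac{p}{2\lambda}\sqrt{\rho_\tau}\drdr^2,
\]
hence $-\eta'(\tau)=4\lambda^2\iint_{\bRb^{2d}}\gaga \nabla_p\sqrt{\rho_\tau}+\tfrac{p}{2\lambda}\sqrt{\rho_\tau}\drdr^2\,dqdp\ge0$. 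Integrating this over $\tau\in[t,t+T]$ (equivalently, integrating $-\eta'(t+s)$ over $s\in[0,T]$) and using Fubini, together with the definition $\rho^{(t)}(s,q,p):=\rho_{t+s}(q,p)$, we obtain
\[
\eta(t)-\eta(t+T)=4\lambda^2\int_0^T\!\!\iint_{\bRb^{2d}}\gaga \nabla_p\sqrt{\rho_{t+s}}+\tfrac{p}{2\lambda}\sqrt{\rho_{t+s}}\drdr^2\,dqdp\,ds=4\lambda^2\left\|\nabla_p\sqrt{\rho^{(t)}}+\tfrac{p}{2\lambda}\sqrt{\rho^{(t)}}\right\|_{{\rm L}^2([0;T]\times\bRb^d\times\bRb^d)}^2.
\]
Since the left-hand side tends to $0$ as $t\to\infty$ by the first part, so does the right-hand side, which is precisely \eqref{meryem}.

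This argument is essentially bookkeeping; the only points that deserve a line of justification are that $\eta$ is absolutely continuous, so that it is recovered by integrating \eqref{pareil} — guaranteed by Lemma \ref{nadege}, which produces the derivative in closed form — and that Fubini applies when swapping the $ds$- and $dqdp$-integrations, which is immediate since the integrand is non-negative and its total integral equals the finite number $\eta(t)-\eta(t+T)$. No genuine obstacle arises here: the substantive work, namely controlling the trajectory $\{\rho_t\}_{t\geq0}$ and the structure of the limit set $\mathcal{A}_\sigma$, is carried out in the subsequent lemmas and propositions.
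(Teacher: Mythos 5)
Your proposal is correct and follows exactly the route the paper intends: the paper offers no written proof (it labels the corollary as ``readily obtained'' from Corollary \ref{hanae} and identity \eqref{pareil}), and your argument --- convergence of $\eta$ along both $t$ and $t+T$, the fundamental theorem of calculus, the factorisation $p\sqrt{\rho}+2\lambda\nabla_p\sqrt{\rho}=2\lambda\bigl(\nabla_p\sqrt{\rho}+\tfrac{p}{2\lambda}\sqrt{\rho}\bigr)$ producing the constant $4\lambda^2$, and Tonelli for the nonnegative integrand --- is precisely the bookkeeping being elided. No discrepancy with the paper.
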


From now on, we consider a sequence $(t_n)_n$ of positive real numbers which converges to infinity. And, by $\rho^n$, we denote $\rho^{(t_n)}$. By $W_n$, we denote the potential $V+F\ast\rho^n$.

We now prove that the family $\rho^n$ is relatively compact in $\mathcal{C}\left([0;T],{\rm L}^1\left(\bRb^d\times\bRb^d\right)\right)$. To do so, we need to show the uniform (with respect to the time) boundedness of the moments of $\rho_t$.

\begin{lem}
\label{hanae2}
There exists $C_0>0$ such that
\begin{equation*}
\sup_{t\geq0}\Upsilon_\lambda(\rho_t)=\sup_{t\geq0}\iint_{\bRb^{2d}}\left(\frac{p^2}{2}+V(q)+\frac{1}{2}F\ast\rho_t(q)\right)\rho_t(q,p)dqdp\leq C_0\,,
\end{equation*}
and
\begin{equation*}
\sup_{t\geq0}\iint_{\bRb^{2d}}\rho_t(q,p)\log\left[\rho_t(q,p)\right]\mathds{1}_{\rho_t(q,p)\geq1}dqdp\leq C_0\,.
\end{equation*}
\end{lem}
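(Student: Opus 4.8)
The plan is to combine two facts that are already available: the free-energy $\eta(t)=\Upsilon_\lambda(\rho_t)$ is non-increasing (Lemma \ref{nadege}), hence $\Upsilon_\lambda(\rho_t)\le\Upsilon_\lambda(\rho_0)$ for all $t\ge0$; and the free energy dominates, up to a controlled error, both the kinetic-plus-potential energy and the positive part of the entropy. Concretely, I would first record the identity
\[
\Upsilon_\lambda(\rho_t)=\iint_{\bRb^{2d}}\!\!\Big(\tfrac{p^2}{2}+V(q)+\tfrac12 F\ast\rho_t(q)\Big)\rho_t\,dqdp+\lambda\iint_{\bRb^{2d}}\!\!\rho_t\log\rho_t\,dqdp,
\]
and split the entropy term into its positive and negative parts, $\iint\rho_t\log\rho_t=P_t-N_t$ where $P_t=\iint\rho_t\log\rho_t\,\mathds{1}_{\rho_t\ge1}$ and $N_t=-\iint\rho_t\log\rho_t\,\mathds{1}_{\rho_t<1}\ge0$.

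Next I would reuse verbatim the estimate from the proof of Lemma \ref{oumaima}: there the negative part of the entropy was bounded above by a multiple of the second $q$- and $p$-moments, namely
\[
N_t\le\frac{1}{2\lambda}\iint_{\bRb^{2d}}(\|q\|^2+\|p\|^2)\rho_t\,dqdp+\frac{\lambda}{2}+C(d).
\]
Since $F\ast\rho_t\ge0$ and, by Assumption (M-3), $V(q)\ge C_4\|q\|^4-C_2\|q\|^2\ge c_0\|q\|^2-c_1$ for suitable constants (using Young's inequality on $\|q\|^4$ vs $\|q\|^2$), the quantity $\iint(\tfrac{p^2}{2}+V(q))\rho_t$ controls $\iint(\|q\|^2+\|p\|^2)\rho_t$ from above up to additive constants. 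Therefore the inequality $\lambda N_t\le \tfrac12\iint(\tfrac{p^2}{2}+V+\tfrac12F\ast\rho_t)\rho_t+C$ holds, and plugging this into the split identity gives, after absorbing the $\tfrac12$-factor,
\[
\frac12\iint_{\bRb^{2d}}\Big(\tfrac{p^2}{2}+V(q)+\tfrac12F\ast\rho_t\Big)\rho_t\,dqdp+\lambda P_t\le \Upsilon_\lambda(\rho_t)+C\le\Upsilon_\lambda(\rho_0)+C.
\]
Both terms on the left are nonnegative (for $P_t$ this is clear; for the energy term one uses $V\ge -c_1$ and $F\ast\rho_t\ge0$, so it is bounded below, and one can shift by a constant to make it nonnegative), so each is bounded by a constant $C_0$ depending only on $\rho_0$, $\lambda$, $d$, and the structural constants in (M-3). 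This yields both displayed bounds of the lemma.

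The only genuinely delicate point is making the splitting and the sign bookkeeping airtight: one must be careful that $\Upsilon_\lambda(\rho_0)$ is finite — which is exactly what Assumptions (M-6) and (M-7) guarantee (finite moments in $q$ and $p$, finite entropy) — and that the energy term, though possibly negative, is bounded below uniformly so that it can legitimately be moved to the right-hand side and then shifted to be nonnegative. Everything else is a repetition of the moment/entropy estimates already carried out in Lemma \ref{oumaima}, so I expect no new analytic obstacle; the ``hard part'' is purely the correct handling of the constants and the direction of the inequalities.
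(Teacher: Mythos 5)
Your strategy is the same as the paper's: monotonicity of $\eta$ from Lemma \ref{nadege} gives $\Upsilon_\lambda(\rho_t)\le\Upsilon_\lambda(\rho_0)$, the negative part of the entropy is dominated by second moments as in the proof of Lemma \ref{oumaima}, the second moments are absorbed into the energy term, and each remaining nonnegative piece (energy and positive entropy part) is then individually bounded. That outline is correct and is exactly how the paper argues.

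However, one step fails as literally written. Reusing the Lemma \ref{oumaima} estimate \emph{verbatim} gives $\lambda N_t\le\tfrac12\iint(\|q\|^2+\|p\|^2)\rho_t\,dqdp+C$, and the coefficient of $\|p\|^2$ here is $\tfrac12$ --- identical to the coefficient of the kinetic energy in $E_t:=\iint(\tfrac{p^2}{2}+V+\tfrac12F\ast\rho_t)\rho_t$. Hence the claimed absorption $\lambda N_t\le\tfrac12 E_t+C$ is false in the momentum variable ($\tfrac12\|p\|^2=\tfrac{p^2}{2}$, not $\tfrac12\cdot\tfrac{p^2}{2}$): moving $\lambda N_t$ to the right-hand side cancels the kinetic energy exactly instead of leaving half of it, and you lose all control of $\iint p^2\rho_t$, hence of $E_t$ and, downstream, of $N_t$ itself. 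The repair is short but necessary: rerun the $I_+$/$I_-$ splitting of Lemma \ref{oumaima} with threshold $e^{-\varepsilon\|(q,p)\|}$, i.e.\ use Young's inequality $\|(q,p)\|\le\tfrac{1}{2\varepsilon}+\tfrac{\varepsilon}{2}\|(q,p)\|^2$ with $\varepsilon$ so small that $\tfrac{\lambda\varepsilon}{2}\le\tfrac14$. Then $\lambda N_t\le\tfrac14\iint(\|q\|^2+\|p\|^2)\rho_t\,dqdp+C_\varepsilon$, and since $\tfrac14\|q\|^2\le\tfrac12 V(q)+c$ by (M-3) and $\tfrac14\|p\|^2=\tfrac12\cdot\tfrac{p^2}{2}$, the absorption into $\tfrac12 E_t$ goes through; the rest of your bookkeeping (positivity of $P_t$ and of $F\ast\rho_t$, lower bound on $V$, finiteness of $\Upsilon_\lambda(\rho_0)$ from (M-6)--(M-7)) then closes the proof. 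The paper itself is silent on this point --- it only says ``by proceeding like in Lemma \ref{oumaima}'' --- but the same $\varepsilon$-adjustment is needed there, so getting this constant right is the one substantive issue in the lemma, and your write-up, which flags the constants as the delicate point, gets precisely that constant wrong.
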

\begin{proof}
According to the hypothesis of the paper, we have $\Upsilon_\lambda(\rho_0)<\infty$. And, according to Lemma \ref{nadege}, $\Upsilon_\lambda(\rho_t)\leq\Upsilon_\lambda(\rho_0)$ for any $t\geq0$. We remind the reader Inequality \eqref{eq:fr:minoration}:

\begin{equation*}
\Upsilon_\lambda(\rho)\geq C'(\lambda, d)+\iint_{\mathbb{R}^{2d}}\left(V(q)-\frac{\|q\|^2}{2}\right)\rho(q,p)dqdp\,.
\end{equation*}
We immediately obtain the finiteness of $\displaystyle\sup_{t\geq0}\iint_{\bRb^{2d}}V(q)\rho_t(q,p)dqdp$. In particular, it follows from the assumption (\textbf{M-3}) that the second moment for the variable $q$ of $\rho_t$ is finite. Consequently, by proceeding like in Lemma \ref{oumaima}, we obtain the uniform (with respect to the time) boundedness of $-\iint_{\mathbb{R}^{2d}}\rho_t(q,p)\log\left(\rho_t(q,p)\right)\mathds{1}_{\{\rho_t(q,p)<1\}}dqdp$. We deduce that $\iint_{\mathbb{R}^{2d}}F\ast\rho_t(q)\rho_t(q,p)dqdp$ and $\iint_{\bRb^{2d}}\rho_t(q,p)\log\left[\rho_t(q,p)\right]\mathds{1}_{\rho_t(q,p)\geq1}dqdp$ are bounded, uniformly with respect to the time.
\end{proof}

We now follow the plan of \cite{BCS97}.

\begin{prop}
\label{violette}
The sequence of distributions $\left\{\rho^n\,,\,n\in\mathbb{N}^*\right\}$ is relatively compact in \\ $\mathcal{C}\left([0;T],{\rm L}^1\left(\bRb^d\times\bRb^d\right)\right)$.
\end{prop}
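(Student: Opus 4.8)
The plan is to establish relative compactness in $\mathcal{C}\left([0;T],{\rm L}^1\left(\bRb^d\times\bRb^d\right)\right)$ via an Arzel\`a--Ascoli / Aubin--Lions type argument, exactly following the scheme of \cite{BCS97}. There are two ingredients to secure: (i) for each fixed time $s\in[0;T]$, the family $\{\rho^n(s,\cdot,\cdot)\}_n$ is relatively compact in ${\rm L}^1(\bRb^d\times\bRb^d)$; and (ii) the family $\{\rho^n\}_n$ is equicontinuous in time as a family of curves with values in ${\rm L}^1$ (equipped, say, with a weak enough topology to interpolate, then upgraded). Combining the two via a diagonal extraction over a countable dense set of times and the time-equicontinuity yields a limit in $\mathcal{C}\left([0;T],{\rm L}^1\right)$.

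For ingredient (i), the key is tightness plus control of the entropy. From Lemma \ref{hanae2} we have a uniform bound on $\iint V(q)\rho_t\,dqdp$ and on $\iint \frac{p^2}{2}\rho_t\,dqdp$ (since the interaction term is nonnegative and the free energy is uniformly bounded), hence by \textbf{(M-3)} a uniform bound on the fourth moment in $q$ and the second moment in $p$; this gives tightness of $\{\rho^n(s,\cdot,\cdot)\}_n$ uniformly in $s$. Lemma \ref{hanae2} also gives a uniform bound on the positive part $\iint \rho_t\log\rho_t\,\mathds{1}_{\rho_t\geq 1}\,dqdp$, and combined with the moment bounds (which control the negative part of the entropy as in the proof of Lemma \ref{oumaima}) this yields a uniform bound on $\iint |\rho_t\log\rho_t|\,dqdp$. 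Tightness plus a uniform $L\log L$ bound gives, by the Dunford--Pettis theorem, relative weak compactness in ${\rm L}^1$; to get strong ${\rm L}^1$ compactness one additionally needs a smoothing estimate — here the natural one comes from \eqref{meryem}, which controls $\nabla_p\sqrt{\rho^{(t)}}$ in ${\rm L}^2$, giving some regularity in the $p$-variable, while regularity in $q$ is transferred from $p$ through the transport structure $\partial_t\rho + p\cdot\nabla_q\rho = \dots$. This is the step that mirrors the velocity-averaging / regularization lemmas used in \cite{BCS97}.

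For ingredient (ii), time-equicontinuity, I would test the VFP equation \eqref{eq:VFP} against a fixed smooth compactly supported function $\varphi(q,p)$ and estimate $\left|\iint (\rho^n(s_2,\cdot,\cdot) - \rho^n(s_1,\cdot,\cdot))\varphi\,dqdp\right|$ by integrating in time over $[s_1,s_2]$; each term on the right-hand side is bounded using the uniform moment bounds from Lemma \ref{hanae2} and the polynomial growth of $\nabla V$ and $\nabla F$ from \textbf{(M-2)} and \textbf{(M-4)}, so the increment is $O(|s_2-s_1|)$ uniformly in $n$. This gives equicontinuity in a weak (e.g.\ weighted negative Sobolev) topology; intersecting with the ${\rm L}^1$-compactness from (i) and using a standard interpolation upgrades it to equicontinuity in ${\rm L}^1$. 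Then Arzel\`a--Ascoli concludes.

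I expect the main obstacle to be ingredient (i), and specifically the passage from weak ${\rm L}^1$ compactness (easy, from Dunford--Pettis via the uniform entropy and moment bounds) to strong ${\rm L}^1$ compactness in the degenerate kinetic setting: the diffusion acts only in $p$, so one only gets $\sqrt{\rho}\in {\rm L}^2_t H^1_p$ from \eqref{meryem}, and regularity in $q$ must be recovered through the free-streaming term, which is where the averaging-lemma machinery of \cite{BCS97} (or an Aubin--Lions argument exploiting $\partial_t\rho$ bounded in a negative-order space) is needed. The nonlocal term $F\ast\rho^n$ appearing in $W_n$ is comparatively harmless: the convolution with the polynomially-growing $F$ is controlled by the moment bounds, and by itself it converges along a subsequence. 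Everything else — tightness, the entropy bookkeeping — is a routine repetition of the estimates already carried out in Lemmas \ref{oumaima} and \ref{hanae2}.
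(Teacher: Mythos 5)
Your plan is correct and coincides with the paper's: the authors give no argument at all for Proposition~\ref{violette}, deferring entirely to \cite[Lemma 5.2]{BCS97}, and your sketch --- uniform moment and entropy bounds from Lemma~\ref{hanae2} giving tightness and Dunford--Pettis weak $\mathrm{L}^1$ compactness, $p$-regularity from the dissipation estimate \eqref{meryem} upgraded to strong compactness via the averaging/Aubin--Lions machinery attached to the free-streaming term, and weak time-equicontinuity from testing the equation against compactly supported functions --- is precisely the skeleton of that cited proof. You also correctly isolate the real difficulty, namely recovering compactness in $q$ from a diffusion that acts only in $p$.
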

The proof of this Proposition is similar as that of \cite[Lemma 5.2]{BCS97}; hence we omit it here.

Consequently, there exists a subsequence $(t_{\varphi(n)})_n$ of $(t_n)_n$ such that $\rho^{\varphi(n)}$ converges to a function $\rho^\infty$. For the comfort of the reading, we will denote this subsequence as $(t_n)_n$. As a consequence, 
\begin{equation*}
\sqrt{\rho^n}\longrightarrow\sqrt{\rho^\infty}\,,
\end{equation*}
in ${\rm L}^2\left([0;T]\times\bRb^d\times\bRb^d\right)$. Using \eqref{meryem}, we obtain that
\begin{equation}
\label{meryem2}
\nabla_p\sqrt{\rho^{\infty}}+\frac{p}{2\lambda}\sqrt{\rho^{\infty}}=0\,,
\end{equation}
in the sense of distributions on $[0;T]\times\bRb^d\times\bRb^d$. Multiplying \eqref{meryem2} by $\exp\left\{\frac{p^2}{4\lambda}\right\}$, we obtain
\begin{equation*}
\nabla_p\left(\exp\left\{\frac{p^2}{4\lambda}\right\}\sqrt{\rho^{\infty}}\right)=0\,,
\end{equation*}
in the sense of distributions on $[0;T]\times\bRb^d\times\bRb^d$, which implies the existence of a function $g_\infty\in{\rm L}^1_{{\rm loc}}\left([0;T]\times\bRb^d\right)$ such that
\begin{equation*}
\rho^{\infty}(t,q,p)=g_\infty(t,q)Z_\lambda^{-1}\exp\left\{-\frac{p^2}{2\lambda}\right\}\,,
\end{equation*}
where $Z_\lambda$ is the normalising constant such that $\int_{\R^d}e^{-\frac{p^2}{2\lambda}}\,dp=1$.

On the other hand, by proceeding like in \cite{AOP,SPA}, we can prove that $\nabla F\ast\rho^n(q)$ converges uniformly on each compact set towards $\nabla F\ast\rho^\infty(q)$.

\begin{lem}
\label{tyler}
We have:

\begin{equation*}
\frac{\partial}{\partial t}\rho^\infty=-\div_q\Big(\rho^\infty p\Big)+\div_p\Big(\rho^\infty(\nabla_qV+\nabla_qF\ast\rho^\infty+p)\Big)+\lambda\Delta_p\rho^\infty\,.
\end{equation*}

\end{lem}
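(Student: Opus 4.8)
The plan is to pass to the limit $n\to\infty$ in the weak formulation of the VFP equation satisfied by each $\rho^n$. Since $\rho^n(s,q,p)=\rho_{t_n+s}(q,p)$ is merely a time shift of the solution $\rho_t$ of \eqref{eq:VFP}, it is itself a solution of \eqref{eq:VFP} on the slab $s\in[0,T]$. Testing against an arbitrary $\phi\in\mathcal{C}^\infty_c\big((0,T)\times\bRb^d\times\bRb^d\big)$ and integrating by parts so as to transfer every derivative onto $\phi$, one obtains
\begin{align*}
-\int_0^T\iint_{\bRb^{2d}}\rho^n\,\partial_s\phi\,dq\,dp\,ds
=\int_0^T\iint_{\bRb^{2d}}\Big[&\rho^n\,p\cdot\nabla_q\phi-\rho^n\big(\nabla_qV+\nabla_qF\ast\rho^n+p\big)\cdot\nabla_p\phi\\
&{}+\lambda\,\rho^n\,\Delta_p\phi\Big]\,dq\,dp\,ds.
\end{align*}
The essential feature of this formulation is that the degenerate second-order term has become $\lambda\iint\rho^n\,\Delta_p\phi$, so that only the $L^1$ convergence of $\rho^n$ — and no control on its derivatives — is needed in the limit.

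First I would handle the linear terms. By Proposition \ref{violette}, $\rho^n\to\rho^\infty$ in $\mathcal{C}\big([0,T],L^1(\bRb^d\times\bRb^d)\big)$; since $\phi$ has compact support, each of the multipliers $\partial_s\phi$, $p\cdot\nabla_q\phi$, $\nabla_qV\cdot\nabla_p\phi$, $p\cdot\nabla_p\phi$ and $\Delta_p\phi$ is bounded on $\supp\phi$ (note that $V$ is smooth by \textbf{(M-1)}, hence $\nabla_qV$ is bounded on the compact $q$-projection of $\supp\phi$ despite its polynomial growth at infinity). Hence each corresponding integral converges to the same integral with $\rho^\infty$ replacing $\rho^n$.

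The only genuinely nonlinear term is $\rho^n\,\nabla_qF\ast\rho^n$, for which I would use the splitting
\[
\rho^n\,\nabla_qF\ast\rho^n-\rho^\infty\,\nabla_qF\ast\rho^\infty
=\rho^n\big(\nabla_qF\ast\rho^n-\nabla_qF\ast\rho^\infty\big)+\big(\rho^n-\rho^\infty\big)\,\nabla_qF\ast\rho^\infty.
\]
For the first summand, the absolute value of its contribution to the weak formulation is bounded by a constant times the supremum of $\|\nabla F\ast\rho^n-\nabla F\ast\rho^\infty\|$ over the compact $(s,q)$-projection of $\supp\phi$ (using $\iint\rho^n\,dq\,dp=1$ and the boundedness of $\nabla_p\phi$), which tends to $0$ by the uniform-on-compacts convergence of $\nabla F\ast\rho^n$ recalled just before the statement — this itself resting on the uniform moment bounds of Lemma \ref{hanae2}. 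For the second summand I would invoke the local boundedness of the continuous function $\nabla_qF\ast\rho^\infty$ together with the $L^1$-convergence of $\rho^n$. Passing to the limit in all terms then shows that $\rho^\infty$ satisfies the displayed weak formulation for every $\phi$, i.e. the asserted identity holds in the sense of distributions on $(0,T)\times\bRb^d\times\bRb^d$.

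The hard part is the coexistence of the degenerate diffusion and of the nonlinear nonlocal drift: the former is neutralised by the integration-by-parts trick that moves every derivative onto $\phi$, and the latter by the fact — established beforehand by adapting the arguments of \cite{AOP,SPA} — that $\nabla F\ast\rho^n$ converges uniformly on compact sets, which is precisely what allows one to pass to the limit in the product $\rho^n\,(\nabla F\ast\rho^n)$ despite having only $L^1$ convergence of the factor $\rho^n$. Everything else reduces to the uniform moment and entropy estimates already secured in Lemma \ref{hanae2}.
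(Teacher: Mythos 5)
Your argument is correct and is essentially the same one the paper delegates to \cite[Theorem 1.2]{BCS97}: pass to the limit in the weak formulation, using the $\mathcal{C}([0,T],{\rm L}^1)$ convergence from Proposition \ref{violette} for the linear terms and the uniform-on-compacts convergence of $\nabla F\ast\rho^n$ for the nonlocal drift. The paper omits the details entirely, so your write-up is a faithful, self-contained reconstruction of the intended proof.
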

The proof of this Lemma is similar to that of \cite[Theorem 1.2]{BCS97} so we omit it here.

Inserting $\rho^{\infty}(t,q,p)=g_\infty(t,q)Z_\lambda^{-1}\exp\left\{-\frac{p^2}{2\lambda}\right\}$ in the formula in Lemma \ref{tyler}, we find
\begin{equation}
\label{sandra}
\frac{\partial}{\partial t}g_\infty=-p\left[\nabla_q g_\infty+\frac{1}{\lambda}\left(\nabla_qV(q)+\nabla_qF\ast g_\infty(q)\right)g_\infty\right]\,.
\end{equation}
We take the derivative with respect to $p$ in \eqref{sandra} and we obtain
\begin{equation*}
\nabla_q g_\infty+\frac{1}{\lambda}\left(\nabla_qV(q)+\nabla_qF\ast g_\infty(q)\right)g_\infty=0\,,
\end{equation*}
so that $g_\infty(t,q)=C(t)\exp\left\{-\frac{1}{\lambda}\left(V(q)+F\ast\rho^\infty(q)\right)\right\}$. Inserting this in \eqref{sandra}, we deduce $\frac{\partial}{\partial t}g_\infty=0$. Consequently, we have
\begin{equation}
\label{stannis}
\rho^\infty(t,q,p)=C\exp\left\{-\frac{1}{\lambda}\left(\frac{p^2}{2}+V(q)+F\ast\rho^\infty(q)\right)\right\}\,.
\end{equation}
This implies that $\rho^\infty\left(t,.,.\right)$ is an invariant probability for any $t\in[0;T]$. In particular, $\rho^\infty(0,.,.)$ is an invariant probability.

By making a compilation of all the previous results, we obtain

\begin{prop}
\label{manue}
For any sequence of positive reals $(t_n)_n$ which converges to $+\infty$, we can extract a subsequence which converges to an invariant probability.
\end{prop}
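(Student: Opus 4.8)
The plan is to assemble the statement from the chain of results running from Corollary \ref{oumaima2} up to equation \eqref{stannis}; no genuinely new estimate is needed, only a careful extraction of subsequences and a check that the limiting object is a bona fide probability measure.

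First, given a sequence $(t_n)_n$ of positive reals with $t_n\to+\infty$, I would set $\rho^n(s,q,p):=\rho_{t_n+s}(q,p)$ for $s\in[0;T]$, exactly as in the excerpt. By Proposition \ref{violette} the family $\{\rho^n\}_n$ is relatively compact in $\mathcal{C}([0;T],{\rm L}^1(\bRb^d\times\bRb^d))$, so one can extract a subsequence — which I relabel $(t_n)_n$ — along which $\rho^n\to\rho^\infty$ in that space. In particular $\sqrt{\rho^n}\to\sqrt{\rho^\infty}$ in ${\rm L}^2([0;T]\times\bRb^d\times\bRb^d)$ (since $|\sqrt a-\sqrt b|^2\le|a-b|$), and combining this with \eqref{meryem} — which holds for every time shift and hence survives the passage to the distributional limit — yields \eqref{meryem2}, i.e. $\rho^\infty$ is, for a.e. $t\in[0;T]$, a local Maxwellian in the $p$ variable: $\rho^\infty(t,q,p)=g_\infty(t,q)Z_\lambda^{-1}\exp\{-p^2/(2\lambda)\}$.

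Next I would invoke the uniform convergence of $\nabla F\ast\rho^n$ on compact sets (established as in \cite{AOP,SPA}) together with Lemma \ref{tyler} to conclude that $\rho^\infty$ itself solves the VFP equation \eqref{eq:VFP} on $[0;T]$. Feeding the Maxwellian ansatz into this equation produces \eqref{sandra}; differentiating in $p$, solving the resulting first-order equation in $q$, and substituting back forces $\partial_t g_\infty=0$, so that $\rho^\infty$ is time-independent and of the Gibbs form \eqref{stannis}, hence an element of $\mathcal{S}_\sigma$ for every $t\in[0;T]$, in particular at $t=0$.

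Finally I would identify $\rho^\infty(0,\cdot,\cdot)$ as the weak limit of the original sequence along this subsequence: since $\rho^n\to\rho^\infty$ in $\mathcal{C}([0;T],{\rm L}^1)$ we have $\|\rho_{t_n}-\rho^\infty(0,\cdot,\cdot)\|_{{\rm L}^1}=\|\rho^n(0,\cdot,\cdot)-\rho^\infty(0,\cdot,\cdot)\|_{{\rm L}^1}\to 0$, which simultaneously guarantees that $\rho^\infty(0,\cdot,\cdot)$ has total mass one (so it is a genuine probability, not merely a subprobability) and implies $\rho_{t_n}\to\rho^\infty(0,\cdot,\cdot)$ weakly, since testing against a bounded continuous $\phi$ costs at most $\|\phi\|_\infty\|\rho_{t_n}-\rho^\infty(0,\cdot,\cdot)\|_{{\rm L}^1}$. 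The main obstacle is not this bookkeeping but the ingredients it rests upon — the compactness of Proposition \ref{violette}, itself built on the uniform moment and entropy bounds of Lemma \ref{hanae2}, and the stability of the nonlocal term $\nabla F\ast\rho^n$ — all of which are available; the device of working with the shifted trajectories $\rho^{(t)}$ is exactly what compensates for the fact, noted after Lemma \ref{nadege}, that the free-energy dissipation alone does not characterise the limit points of $\{\rho_t\}_{t\ge 0}$.
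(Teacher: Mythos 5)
Your proposal is correct and follows essentially the same route as the paper, which obtains Proposition \ref{manue} precisely by compiling the chain Proposition \ref{violette} $\to$ \eqref{meryem2} $\to$ Lemma \ref{tyler} $\to$ \eqref{stannis}; the only additions you make (the elementary bound $|\sqrt a-\sqrt b|^2\le|a-b|$ and the explicit check that the ${\rm L}^1$ limit at $s=0$ retains unit mass and hence converges weakly) are harmless clarifications of steps the paper leaves implicit.
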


We are now at the position to prove Theorem \ref{theo: main result}.
\begin{proof}[Proof of Theorem \ref{theo: main result}]
If there was a unique invariant probability, the convergence would be proven. However, in our setting, it is possible that there are several invariant probabilities, see \cite{DuongTugaut2016}.

Let us assume that there are more than one adherence values. These adherence values are necessarily invariant probabilities thanks to Proposition \ref{manue}. We deduce the first point of Theorem A, that is $\mathcal{A}_\sigma\subset\mathcal{S}_\sigma$. We now show that $\mathcal{A}_\sigma$ is path-connected. We proceed like in \cite{AOP,SPA}.

Set $\varphi_0$ a test function ($\mathcal{C}^\infty$ and with compact support) such that

\begin{equation*}
\int_{\bRb^d\times\bRb^d}\varphi_0(q,p)\rho^\infty(q,p)dqdp\neq0
\end{equation*}
for any $\rho^\infty\in\mathcal{A}_\sigma$ and such that there exist $\rho_1^\infty,\rho_2^\infty\in\mathcal{A}_\sigma$ satisfying

\begin{equation*}
\int_{\bRb^d\times\bRb^d}\varphi_0(q,p)\rho_1^\infty(q,p)dqdp<0<\int_{\bRb^d\times\bRb^d}\varphi_0(q,p)\rho_2^\infty(q,p)dqdp\,.
\end{equation*}

Since $\rho_1^\infty$ and $\rho_2^\infty$ are two adherence values, there exists a sequence $(t_n)$ which converges towards $\infty$ such that 
\begin{equation*}
\int_{\bRb^d\times\bRb^d}\varphi_0(q,p)\rho_{t_n}(q,p)dqdp=0\,.
\end{equation*}
According to Proposition \ref{manue}, there exists a subsequence $\rho_{t_{\varphi(n)}}$ which converges weakly towards an element of $\mathcal{A}_\sigma\subset\mathcal{S}_\sigma$. However, the integral of $\varphi_0$ with respect to this limiting value is $0$, which is absurd.

The proof of Theorem \ref{theo: main result} is achieved.
\end{proof}

\textbf{Acknowledgements}. M. H. Duong was supported by ERC Starting Grant 335120.
\bibliographystyle{plain}

\end{document}